 \newtheorem{thm}{Theorem}[section]
 \newtheorem{prop}[thm]{Proposition}
 \theoremstyle{definition}
 \newtheorem{defn}[thm]{Definition}
 \theoremstyle{remark}
 \numberwithin{equation}{section}
\newcommand{\mc}{\mathcal}
\newcommand{\mb}{\mathbb}
\newcommand{\wh}{\widehat}
\date{\today}
\begin{document}

%
%
%
%
%
%
%
%
%

\title{The Cayley Graph of Neumann  Subgroups}

\author{Andrzej Matra\'{s}}

\address{Faculty of Mathematics and Computer Science\\ University of Warmia and Mazury in Olsztyn\\ S\l oneczna 54\\ 10-790 Olsztyn, Poland}

\email{matras@uwm.edu.pl}

\author{Artur Siemaszko}
\address{Faculty of Mathematics and Computer Science\\ University of Warmia and Mazury in Olsztyn\\ S\l oneczna 54\\ 10-790 Olsztyn, Poland}
\email{artur@uwm.edu.pl}
\subjclass{Primary 20E06; Secondary  20F05, 05C25}

\keywords{subgroups of the modular group;  Neumann subgroups; free product of groups; Cayley graphs; projective lines over rings}

\date{\today}
\dedicatory{In memory of Heinrich Wefelscheid}

\begin{abstract} All Cayley representations of the distant graph $\Gamma _Z$ over integers are characterized as Neumann subgroups of the extended modular group. Possible structures of Neumann subgroups are revealed and it is shown that every such a structure can be realized.
\end{abstract}

\maketitle

\section{Introduction} In 1932 Neumann (\cite{N}) investigated subgroups $N^{*}$ of the homogenous modular group $SL(2, \mathbb{Z})$ which are defined by the condition $(N)$.
\begin{itemize}
\item[$(N)$]
For any ordered pair of relative prime integers $(a,c)$ $N^{*}$ contains exactly one matrix in which the first column consist of the ordered par $(a,c)$
\end{itemize}
Neumann investigated these subgroups in connection with problems of foundation of geometry. In 1973 Magnus explored subgroups $N$ of the modular group $M$ such the its natural extension $N^{*}$ by the central element of $SL(2, \mathbb{Z})$ has Neumann $(N)$. He proved they are maximal nonparabolic subgroups of the modular group $M$ (\cite{M}).

In \cite{H3}, \cite{BHp}, \cite{H} it was considered the notion of the distant graph over a ring with the identity, which is an combinatoric object represented the projective line $\mathbb{P}(R)$ over a ring $R$ \cite{H2}, \cite{H3}.
In the case of the integers the vertices as the elements of $\mathbb{P}(Z) \simeq Q\cup\{\infty \}=:\bar Q$
 are all cyclic submodules of the $Z$-module $Z^2$ generated by the vectors with co-prime coordinates. The edges of this graph connect vertices whose generators are the rows of an invertible $(2 \times 2)$-matrix over $Z$. This distant graph we will denote by $\Gamma_Z$. The graph $\Gamma_Z$ is depicted in Fig.~1. Note that we can construct this graph using the Stern-Brocot procedure twice. For the vectors with positive slopes start from $[1,0]$ and $[0,1]$ and for the vectors with negative slopes from $[1,0]$ and $[0,-1]$. To get $\Gamma_Z$ one has to just "glue" the vectors  $[0,1]$ and $[0,-1]$.
  \begin{figure}[ht]{\footnotesize \textbf{ Fig. 1} \hspace{2mm} Distant graph of $\mathbb Z$}\label{dgoi}
\centering
    \includegraphics[width=1\textwidth]{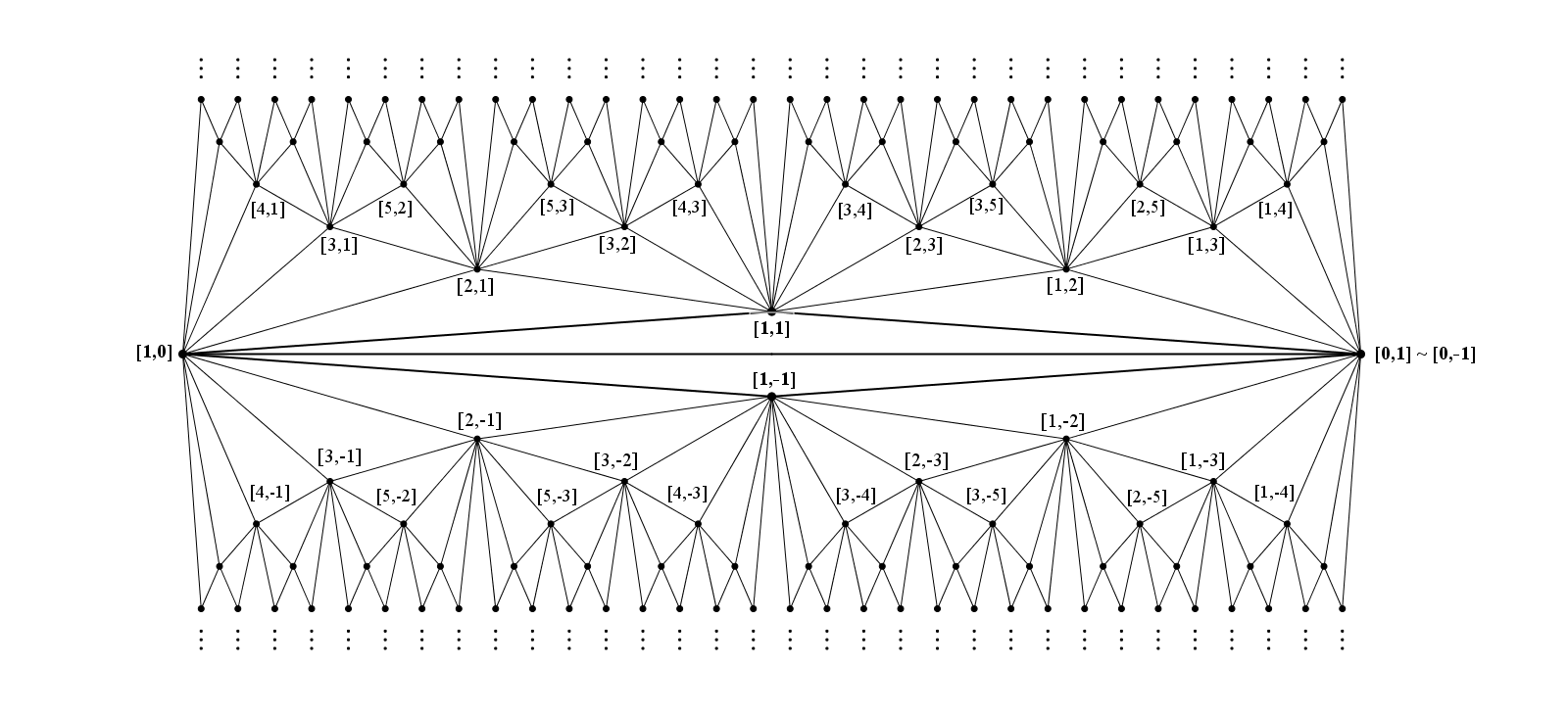}
 \end{figure}

On $\bar Q$ acts the extended modular group $\widehat{M}$ as the group of all linear fractional transformations of the form
$$\alpha  (z) \mapsto\frac {az+b}{cz+d},$$
 where $ a,\,b,\,c,\,d \in Z$, $\;\alpha(\infty)=\frac{a}{c}$, $\;\alpha\left(-\frac{d}{c}\right)=\infty\;$ and  $\;ad-bc=\pm 1$.

The group $\widehat{M}$ contains as an normal subgroup the modular group $M$ of transformations with the determinant 1. We will use the following presentation of the group $\widehat{M}$ (\cite{Mu}) :
$$
\langle \omega,\tau,\nu\;|\;\;
\omega^2 =\nu^2=(\omega\nu)^2=(\omega\tau\nu)^2=(\omega\tau)^3=1\rangle,$$
 where  $\tau(z)=z+1$, $\omega(z)=-\frac{1}{z}$  and $\nu(z)=-z$.\\
Using these generators we have the following presentation of $M$
$$\langle \omega,\tau\;|\;\; \omega^2=(\omega\tau)^3=1\rangle.$$

In \cite{MS2} it was proved that the distant graph $\Gamma_Z$ is a Cayley one and then in \cite{MS3} there were constructed uncountably many its Cayley representation. Inherently, it was proven that  its Cayley representations in the modular group are Neumann subgroups, but not stated explicitly.  After sending our paper we find works of Magnus( \cite{M}), Brennen, Lyndon (\cite{BL1},\cite{BL2}) and realise that our research overlap in part with those in these works.

This paper is a continuation of our project to find all Cayley representations of $\Gamma_Z$ in $PGL(2, \mathbb{Z})$ (\cite{MS1}, \cite{MS2}, \cite{MS3}) and it completes the series of works devoted to the description of Cayley's groups of $\mathbb{P}({Z})$. Because automorphisms groups of $\Gamma_Z$ is $PGL(2, (\mathbb{Z})$ it gives all Cayley groups of $\Gamma_Z$. To this purpose we extended the original definition of Neumann subgroup (comp. \cite{J}, \cite{T} ) to subgroups of $\widehat{ M}.$
Analogously to the "modular" definition from papers of Magnus (\cite{M}), Tretkoff  (\cite{T}), Brenner-Lyndon (\cite{BL1}) we  state the following.
\begin{defn}\label{defN}
A subgroup $\widehat{S}\subset\widehat{M}$ is called a \emph{Neumann subgroup} of $\wh M$  if for every $r \in\bar Q$ there exist exactly one $\alpha\in\widehat{S}$ such that $\alpha(\infty)=r$.
\end{defn}

In the paper we get the following description of all Cayley representations of $\Gamma_Z$. If $\widehat{S}$ is any Cayley group of $\Gamma_Z$ then the following conditions are equivalent:
\begin{enumerate}
\item $\widehat{S}$ is a Neumann subgroup of $\widehat{M}$;
\item the set $\{ \tau^n,\;\tau^n \nu\,:\; n \in \mathbb{Z} \}$ form a complete system of distinct right coset representatives of $\widehat{S}$ in $\widehat{M}$;
\item there exist an involution $\iota \colon \mathbb{Z} \to \mathbb{Z}$ satisfying $$\iota(\iota - \delta_n) = \iota(n+1) + \delta_{n+1}$$ such that $\widehat{S} = \{\sigma_n : n \in \mathbb{Z} \}$, where
$$\sigma_n(z) = \frac{nz-n\iota(n)-\delta_n}{z-\iota(n)},\;\;\;
 \delta_n = \det \sigma_n.$$
\end{enumerate}
If $\widehat{S} \subset M$ the description of this structure is contained in the Theorem~3.1 from the  Brenner-Lyndon paper \cite {BL1} (see also \cite{S}).
We prove this  equivalence in the more general situation of subgroups in $\widehat{M}$ and in contrast to their  algebraic proof our one  is geometrical and uses technics involving the distant graph of $\mathbb{P}(Z)$.
Moreover  it is possible to obtain the following  presentation of $\widehat{S}$:
\begin{align*}
\widehat{S} = \left< \sigma_n\;|\;\; \sigma_n \sigma_{\iota(n)}= \sigma_n \sigma_{\iota(n)+ \delta_n} \sigma_{\iota(n-1)} = 1,\; n \in \mathbb{Z} \right>
\end{align*}
We do not include the proof of this fact because it is long and laborious but it consists in the typical application of the Reidemeister-Schreier procedure. We will prove in the forthcoming paper that if $\wh S$ is a Neumann subgroup then $\wh S$ is a free product of some numbers of groups of order $2$, groups of order $3$ and infinite cyclic groups, likewise subgroups of $M$. The difference is that if a Neumann subgroup is not contained in $M$, it has to posses free generators of negative determinant. Moreover  it is possible to retrieve the set of independent generators from the above presentation. Additionally, if  $\widehat{S} \not \subset M$ then $S = \widehat{S} \cap M$ is a normal, nonparabolic subgroup of index 2 of $\widehat{S}$.

In the last section we describe in details structures of both groups $\wh S$ and $S$ and the connection between them. Let  $\widehat{r_2}$, $\widehat{r_3}$ denote the numbers of a free generators of order $2$ and $3$, respectively. Then let $\wh r_\infty^\pm$ denote the number of free generators with the determinant equal to $\pm 1$, and $\wh r_\infty=\wh r_\infty^++\wh r_\infty^-$. Then we have the following restrictions
\begin{itemize}
\item $\widehat{r_2}+ \widehat{r_3}+ \widehat{r}_{\infty} = \infty$
\item $ \widehat{r}_{\infty}^- \geq 1$ and if $\widehat{r}_{\infty}^+$ is finite then it is even
\item $\widehat{r_2}+ \widehat{r_3}+ \frac{\widehat{r}^+_{\infty}}{2} \geq \widehat{r}_{\infty}^-$
\end{itemize}
Moreover the group S is a free product of $2\widehat{r_2}$ groups isomorphic with $C_2$, $2\widehat{r_3}$ subgroups isomorphic with $C_3$ and $2\widehat{r}_{\infty}-1$ subgroups isomorphic with $\mathbb{Z}$.

The proof of this facts we postpone to next paper because we need to use the coset graph method, which is not presented it this article. Finally using the construction of an involution $Z$ from the work \cite{MS3} we show an realization  of each group with the above parameters.
\section{ Cayley representations of the $\mb Z$-distant graph}

For the purpose of this subsection it is more convenient to use the language of matrices so we treat the extended  modular groups as a quotient  of $GL(2,Z)$: $\widehat M\simeq PGL(2,Z)=GL(2,Z)/\{\pm I^*\}$, where $I^*$ denotes the identity matrix. The elements of $GL(2,Z)$ we will also denote by Greek lowercases with the asterisk as a superscript and their projections by the natural homomorphism $\Pi$ onto $PGL(2,Z)$ by  Greek lowercases (the same as the elements of $\widehat M$). Precisely a map $z\longmapsto\frac{az+b}{cz+d}$ is understood as $\Pi(\alpha^*)$, where $\alpha^*=\scriptsize{\pm\left(\begin{array}{ll}
          a &b \\&\\
          c & d
        \end{array}\right)}_.$

We aim to show that Neumann subgroups of $\widehat M$ are precisely Cayley representations of the distant graph $\Gamma_Z$ of $\mb P(Z)\simeq \bar Q$. Since for every graph its Cayley representation can be treated as a subgroup of its automorphisms group we need the following observation.
\begin{prop}\label{autdg}
The automorphisms group of the distant graph on $\mathbb P( Z)$ is isomorphic to $PGL(2, Z)$.
\end{prop}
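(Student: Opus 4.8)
The plan is to construct the obvious homomorphism $\Phi\colon PGL(2,Z)\to\mathrm{Aut}(\Gamma_{Z})$ and prove it is an isomorphism. Every $\alpha^{*}\in GL(2,Z)$ carries a vector with coprime coordinates to another such vector and, acting on rows, carries $Z$-bases of $Z^{2}$ to $Z$-bases; hence $\alpha=\Pi(\alpha^{*})$ preserves the vertex set $\mb P(Z)\simeq\bar Q$ together with the distant relation, which gives $\Phi$. It is injective: an element of $\ker\Phi$ fixes $\infty=[1,0]$, $0=[0,1]$ and $1=[1,1]$, so any matrix representing it has $c=0$, then $b=0$, then $a=d$, i.e.\ is scalar, hence equals $\pm I^{*}$. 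Everything therefore reduces to surjectivity of $\Phi$.

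For surjectivity I would first isolate the local structure of $\Gamma_{Z}$. The action of $PGL(2,Z)$ on $\mb P(Z)$ is transitive, and the stabilizer of $\infty$ (upper triangular matrices with $\pm1$ on the diagonal) acts on the neighbourhood $N(\infty)=\{[n,1]:n\in Z\}$ --- in which $[m,1]$ and $[n,1]$ are adjacent precisely when $|m-n|=1$, so that $N(\infty)$ is a bi-infinite path --- as the \emph{full} automorphism group $D_{\infty}$ of that path. Two consequences are intrinsic to the abstract graph and hence respected by every $\varphi\in\mathrm{Aut}(\Gamma_{Z})$: the group $PGL(2,Z)$ is transitive on ordered edges; and any two adjacent vertices have exactly two common neighbours (for the model edge $\{\infty,0\}$ these are precisely $1$ and $-1$, and one transports this by edge transitivity).

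Now let $\varphi\in\mathrm{Aut}(\Gamma_{Z})$ be arbitrary. Replacing $\varphi$ by $\Phi(g)\circ\varphi$ for a suitable $g$, we may assume $\varphi(\infty)=\infty$; then $\varphi$ restricts to an automorphism of the path $N(\infty)$, so, composing once more with an element of $\mathrm{Stab}(\infty)$ (which realises all of $D_{\infty}$ on $N(\infty)$), we may further assume that $\varphi$ fixes $0$ and $1$, hence fixes every integer. The Stern--Brocot construction of $\Gamma_{Z}$ now forces $\varphi=\mathrm{id}$, by strong induction on the reduced denominator $q$ of a point $p/q$. Indeed, a reduced $p/q$ with $q\ge2$ is the mediant of its two Stern--Brocot parents $a/b$ and $c/d$; these satisfy $b+d=q$ (so $b,d<q$), are adjacent in $\Gamma_{Z}$ (since $ad-bc=\pm1$), and their two common neighbours are $p/q=[a+c,b+d]$ and $[a-c,b-d]$, the second of which has reduced denominator at most $|b-d|\le q-2<q$, or equals $\infty$ (which happens exactly when $b=d$, forcing $q=2$). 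In either case that second common neighbour is fixed by $\varphi$, by the inductive hypothesis or the normalisation; since $\varphi$ also fixes $a/b$ and $c/d$ and permutes the two common neighbours of this pair, it fixes $p/q$ too. Thus $\varphi$ fixes $\bar Q$ pointwise, so the original $\varphi$ lies in $\Phi\big(PGL(2,Z)\big)$, and surjectivity --- hence the proposition --- follows.

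I expect the main obstacle to be exactly this surjectivity, and within it the closing rigidity step: once a vertex and its link are pinned down, the whole graph is forced. That step rests entirely on the mediant/Stern--Brocot bookkeeping --- that adjacent vertices have exactly two common neighbours, the correct identification of the ``other'' common neighbour of the edge joining the two parents of $p/q$, and the denominator estimate driving the induction --- and on care with the small cases ($q=2$, and the fact that a parent equals $\infty$ only for integers, hence never for $q\ge2$). A variant organisation of the same idea observes that $\Gamma_{Z}$ is the $1$-skeleton of the Farey tessellation, whose maximal cliques are exactly its triangles, so that any graph automorphism permutes the triangles and therefore extends to an automorphism of the tessellation, of which the only ones are those induced by $PGL(2,Z)$.
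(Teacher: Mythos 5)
Your proof is correct, but it is organized quite differently from the paper's. The paper treats $PGL(2,Z)$ as a subgroup of $\mathrm{Aut}(\Gamma_Z)$ from the start, normalizes an arbitrary automorphism so that it fixes a maximal clique (using transitivity on ordered maximal cliques), and then propagates the identity over all of $V(\Gamma_Z)$ using \emph{harmonic quadruples}: an automorphism fixing three vertices of a harmonic quadruple fixes the fourth, and the existence of a chain of harmonic quadruples joining any maximal clique to any vertex is asserted ``by inspection of Fig.~1.'' Your closing rigidity step is in substance the same lemma in disguise --- the four vertices $a/b$, $c/d$, $[a+c,b+d]$, $[a-c,b-d]$ form exactly such a harmonic quadruple (a $4$-cycle whose diagonal $\{a/b,c/d\}$ is an edge) --- but you replace the appeal to the figure by an explicit strong induction on the reduced denominator, driven by the mediant identity $q=b+d$ and the estimate $|b-d|\le q-2$, which makes the connectivity/propagation step fully rigorous. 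You also differ in the normalization: instead of ordered-clique transitivity you use vertex transitivity plus the fact that $\mathrm{Stab}(\infty)$ realizes the full automorphism group of the path $N(\infty)$, and you verify injectivity of $PGL(2,Z)\to\mathrm{Aut}(\Gamma_Z)$ explicitly, which the paper leaves implicit. What your route buys is self-containedness (no reliance on Lemma~1 of the earlier paper characterizing harmonic quadruples, and no ``by inspection'' step); what the paper's route buys is brevity and a formulation that transfers to the projective-line/chain-geometry setting where harmonic quadruples are the natural invariant. Your concluding remark that the maximal cliques are the Farey triangles is essentially the paper's viewpoint. The one point to keep an eye on is that your denominator induction must cover negative rationals as well as positive ones; since the Farey-parent decomposition with $b+d=q$, $b,d\ge1$ holds for every reduced $p/q$ with $q\ge2$ regardless of the sign of $p$, this is fine, but it deserves a sentence.
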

\begin{proof}
We use the notion of a maximal clique in a graph: the set of pairwise adjacent  vertices  is called a \emph{clique}. If a clique is maximal with this property then it is called a \emph{maximal clique}. We also will need a notion of a harmonic quadruple in a distant graph. For the definition we refer to \cite{He}, p.787. By Lemma~1 of (\cite{MS1}) we know that a subset $\{v_1,v_2,v_3,v_4\}\subset V(\Gamma_Z)$ forms a harmonic quadruple iff
\begin{enumerate}
\item $(v_i,v_k,v_j,v_l,v_i)$ is a cycle of  successively adjacent vertices and
\item either $(v_i,v_j)\in E(\Gamma_Z)$ or $(v_k,v_l)\in E(\Gamma_Z)$,
\end{enumerate}
where $\{i,j,k,l\}=\{1,2,3,4\}$.

By inspection of Fig.~1 one can check that for every maximal clique $C$ and every $v\in V(\Gamma_Z)$ there exists a finite sequence of harmonic quadruples $(Q_1,\dots,Q_n)$ such that
\begin{enumerate}
\item $C\subset Q_1$,
\item $Q_i\cap Q_{i+1}$ is a maximal clique,
\item $v\in Q_n$.
\end{enumerate}

Now we are in a position to start the proof. Obviously $ PGL(2, Z)$ is a subgroup of $Aut(\Gamma_Z )$ and $ PGL(2, Z)$ acts transitively on the set of ordered maximal cliques of $\Gamma_Z$. Let $\alpha\in Aut(\Gamma_Z)$ and $C=(v_1,v_2,v_3)$ be an arbitrarily chosen ordered maximal clique in $\Gamma_Z$. There exist  $\eta\in  PGL(2, Z)$ that sends the image of $(v_1,v_2,v_3)$ under $\alpha$ to its original position, i.e. $((\eta\circ \alpha)v_1,(\eta\circ \alpha)v_2,(\eta\circ \alpha)v_3)=(v_1,v_2,v_3)$. We will show that $ \alpha=\eta^{-1}\in PGL(2, Z)$.

Fix an arbitrary  vertex $v\in V(\Gamma_Z)\setminus C$ and let $(Q_1,\dots,Q_n)$ satisfy 1., 2. and 3. Since every automorphism of $\Gamma_Z$ sends a harmonic quadruple to a harmonic one, every automorphism that fixes any of three members of a harmonic quadruple necessary fixes the fourth one. Therefore a simple induction argument yields $(\eta\circ \alpha) v=v$. We have shown that $\eta\circ \alpha=I:=\Pi(I^*)$.
\end{proof}
\begin{thm}\label{cayleyneumann}
A subgroup of $\widehat M$ is a Neumann subgroup iff it is isomorphic to some Cayley representation of the distant graph of $\mathbb P( Z)$.
\end{thm}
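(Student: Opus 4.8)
The plan is to establish the equivalence by using Proposition~\ref{autdg}, which identifies $\mathrm{Aut}(\Gamma_Z)$ with $PGL(2,Z)\simeq\wh M$. Under this identification a Cayley representation of $\Gamma_Z$ is a subgroup $\wh S\subset\wh M$ acting simply transitively on $V(\Gamma_Z)=\bar Q$. So the whole theorem reduces to a purely group-theoretic statement: a subgroup $\wh S\subset\wh M$ acts simply transitively on $\bar Q$ (with respect to the natural action) if and only if it is a Neumann subgroup in the sense of Definition~\ref{defN}. I would phrase the proof around this reformulation.

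First I would record the elementary fact that the stabilizer in $\wh M$ of the point $\infty\in\bar Q$ is exactly the set $\{\tau^n,\ \tau^n\nu : n\in\mathbb Z\}$; equivalently, $\alpha(\infty)=\beta(\infty)$ iff $\alpha$ and $\beta$ lie in the same right coset of this stabilizer. Consequently, for a subgroup $\wh S\subset\wh M$, the map $\alpha\mapsto\alpha(\infty)$ from $\wh S$ to $\bar Q$ is injective iff distinct elements of $\wh S$ produce distinct values at $\infty$, and surjective iff every $r\in\bar Q$ is $\alpha(\infty)$ for some $\alpha\in\wh S$. Thus ``$\wh S$ is a Neumann subgroup'' says precisely that $\alpha\mapsto\alpha(\infty)$ is a bijection $\wh S\to\bar Q$. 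Next I would note that for $\wh S$ to act simply transitively on $\bar Q$ it suffices that it act transitively on $\bar Q$ together with the orbit-counting/stabilizer bookkeeping — but since $\wh M$ itself acts transitively with point stabilizer not contained in $\wh S$ in general, the clean statement is: $\wh S$ acts simply transitively on $\bar Q$ iff for each $r\in\bar Q$ there is exactly one $\alpha\in\wh S$ with $\alpha(r_0)=r$, for a fixed basepoint $r_0$; taking $r_0=\infty$ this is literally Definition~\ref{defN}.

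The one genuinely nontrivial point — and the step I expect to be the main obstacle — is upgrading ``bijection on the single orbit of $\infty$'' to ``simply transitive action on the whole vertex set'', i.e. making sure the Neumann condition at the single point $\infty$ forces the correct behaviour everywhere. Here I would argue as follows. If $\wh S$ is Neumann, pick any $r\in\bar Q$ and any $\gamma\in\wh M$ with $\gamma(\infty)=r$; applying the uniqueness at $\infty$ to the coset data $\gamma^{-1}\wh S\gamma$ is not available since $\wh S$ need not be normal, so instead I would directly count: the elements of $\wh S$ carrying $r$ to a prescribed $r'\in\bar Q$ correspond, via composition with a fixed transformation sending $\infty$ to $r$ and another sending $r'$ to $\infty$, to elements of $\wh S$ of a fixed ``$\infty$-to-$\infty$ type'', and a short argument with the coset representatives $\{\tau^n,\tau^n\nu\}$ shows there is exactly one. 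Conversely, if $\wh S$ is a Cayley representation then it is simply transitive on $\bar Q$, hence in particular the orbit map at $\infty$ is a bijection, which is exactly the Neumann property; this direction is immediate. I would present the forward direction carefully and dispatch the reverse in one line.

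Finally I would assemble the pieces: by Proposition~\ref{autdg} every Cayley representation of $\Gamma_Z$ is (isomorphic to) a subgroup of $\wh M$ acting simply transitively on $\bar Q$, the displayed equivalence shows such a subgroup is exactly a Neumann subgroup, and conversely any Neumann subgroup acts simply transitively on $\bar Q=V(\Gamma_Z)$ and hence — once one checks it acts by graph automorphisms, which is automatic since $\wh M\le\mathrm{Aut}(\Gamma_Z)$ — is a Cayley representation of $\Gamma_Z$. The only subtlety to flag is that a subgroup acting simply transitively on the vertices of a connected graph is a Cayley representation precisely when it also preserves a connection set of edges; here this is free because the action is by automorphisms of $\Gamma_Z$, so no extra work is needed.
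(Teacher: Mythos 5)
Your proposal is correct and follows essentially the same route as the paper: identify the Neumann condition with the statement that $\alpha\mapsto\alpha(\infty)$ is a bijection $\wh S\to\bar Q=V(\Gamma_Z)$, i.e.\ that $\wh S$ acts freely and sharply vertex-transitively, and then invoke the Sabidussi theorem together with Proposition~\ref{autdg} to conclude that such subgroups are exactly the Cayley representations. The only difference is that the step you single out as the main obstacle --- upgrading the bijection at the single point $\infty$ to simple transitivity on all of $\bar Q$ --- needs none of the coset-representative counting you sketch: if $\alpha\mapsto\alpha(\infty)$ is a bijection then transitivity is immediate, and $\gamma(r)=r$ with $r=\alpha(\infty)$ gives $\gamma\alpha(\infty)=\alpha(\infty)$, hence $\gamma\alpha=\alpha$ and $\gamma=I$ by injectivity; the paper accordingly treats this as following directly from the definition.
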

\begin{proof}
Assume that $\wh S \subset \wh M$ is a Neumann subgroup. Then, $\wh S$ treated as a subgroup of $PGL(2,Z)$, by definition  acts on $\mb P(Z)$ freely and sharply vertex-transitively, and thus by the Sabidussi theorem $\wh S$ is a Cayley representation of $\Gamma_Z$.

 Conversely, for a Cayley representation  $\wh S$  of $\Gamma_Z$ and each $v\in V(\Gamma_Z)$ there exists precisely one $\alpha\in\wh S$ with $\alpha e=v$, where $e:= \pm\scriptsize{\left[\begin{array}{c}1\\0\\\end{array}\right]}$, hence $\wh S$ is a Neumann subgroup of $\wh M$.
 \end{proof}

Every Neumann subgroup of $\wh M$, hence a Cayley representation of $\Gamma_Z$,  defines some involution of $Z$ in the following way.\\
 First observe that
 having a Cayley representation $(\wh S,\mc G,\varphi)$ we may assume that $\varphi(1)=e$. Indeed, let $(\wh S,\tilde{\mc G},\tilde\varphi)$ be a Cayley representation of $\Gamma_Z$.  We get the required representation by putting
$$\mc G=\tilde\varphi^{-1}(e)\,\tilde{\mc G}\,(\tilde\varphi^{-1}(e))^{-1},\;\;\;\varphi(\alpha)=\tilde\varphi(\alpha\,\tilde\varphi^{-1}(e)).$$
From the proof of the Sabidussi theorem it follows that if in the Cayley representation of $\Gamma_Z$ $e$ is labeled by $1$ then $\varphi(\alpha)=\alpha e$ no matter of the choice of $\wh S$. Therefore from now on we will not indicate $\varphi$ in the Cayley representations.\\
Now the neighbourhood of $e$ consists of vertices $v_n=\pm\scriptsize{\left[\begin{array}{c}n\\1\\\end{array}\right]}\in\mathbb P( Z)$, $n\in  Z$, hence  $\mathcal G=\{\sigma_n:=\varphi^{-1}(v_n):\;n\in\mb Z\}$. As $\mathcal G=\mathcal G^{-1}$ we have $\sigma_n^{-1}=\sigma_{\iota(n)}$ for some bijection $\iota: Z\longrightarrow Z$. Since $\sigma_{\iota(\iota(n))}=\sigma_{\iota(n)}^{-1}=\sigma_n$, $\iota$ is an involution.
The equality $\sigma_k e=v_k$, $k\in\mb Z$,
applied twice to $n$ and $\iota(n)$ yields
\begin{equation}\label{gener}
\sigma_n=\Pi(\sigma_n^*),\; n\in\mb Z,
\end{equation}
where $$\sigma_n^*=\left(\begin{array}{ll}
          n & -n\iota(n)-\delta_n \\&\\
          1 & -\iota(n)
        \end{array}\right)_.$$
Note that $\delta_n=det\, \sigma_n=det \,\sigma_{\iota(n)}=\delta_{\iota(n)}$.
We also  have
\begin{equation}\label{S}
  \sigma_k^{-1}\sigma_l\in \mathcal G \;\;\;\mbox{ iff }\;\;\;|k-l|=1.
\end{equation}
and  straightforward calculations shows that
$$\sigma_n^{-1}\sigma_{n+1}=\sigma_{\iota(n)-\delta_n}.$$
Therefore the right lower term of  $\sigma_{\iota(n)-\delta_n}^*$ equals to $-(\iota(n+1)+\delta_{n+1})$. It follows that the involution $\iota$ satisfies
\begin{equation}\label{iota}
\iota(\iota(n)-\delta_n)=\iota(n+1)+\delta_{n+1},\;\;\;\delta_n=\delta_{\iota(n)},
\end{equation}
where $(\delta_n)\in\{-1,1\}^{ Z}$. In the sequel we frequently will use the equivalent form of (\ref{iota}):
\begin{equation}\label{iotaeq}
\iota(\iota(n)-\epsilon\delta_n)=
\iota(n+\epsilon)
+\epsilon\delta_{n+\epsilon},\;\;\;\delta_n=\delta_{\iota(n)},
\end{equation}
$\epsilon\in\{-1,+1\}$.
We will need the following equality
\begin{equation}\label{sigmaiota}
\sigma_n=\tau^n\omega\nu^{\frac{1-\delta_n}{2}}\tau^{-\iota(n)}.
\end{equation}

Conversely, given an involution of $Z$ satisfying (\ref{iota}) we can define a subgroup of $PGL(2,Z)$ generated by elements defined by (\ref{gener}). We will say that such subgroups of $\wh M$ are \emph{generated by an involution}. \\
Note that (\ref{iotaeq}) yields the following relations in the subgroups generated by involutions:
\begin{equation}\label{rel1PGl}
\sigma_n\sigma_{\iota(n)}=I,
\end{equation}
\begin{equation}\label{rel2PGl}
\sigma_n\sigma_{\iota(n)-\epsilon\delta_n}\sigma_{\iota(n+\epsilon)}= I,
\end{equation}
It is possible, but a bit laborious, to show that in fact those relations form  presentations of such groups.
We will need versions of those relations in $GL(2,\mb Z)$:
\begin{equation}\label{rel1Gl}
\sigma_n^*\sigma^*_{\iota(n)}=-\delta_n\cdot I^*
\end{equation}
\begin{equation}\label{rel2Gl}
\sigma_n^*\sigma^*_{\iota(n)-\epsilon\delta_n}\sigma^*_{\iota(n+\epsilon)}=\epsilon\delta_n\delta_{n+\epsilon}\cdot I^*.
\end{equation}

\begin{thm}\label{neumanninv}
A subgroup of $\wh M$ is a Neumann subgroup iff it is generated by an involution.
\end{thm}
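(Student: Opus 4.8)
The plan is to treat the two implications separately, the forward one being almost immediate and the converse carrying all the work.

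For ``Neumann $\Rightarrow$ generated by an involution'' there is essentially nothing new to do: this is exactly the content of the discussion preceding the theorem. If $\wh S$ is a Neumann subgroup then by Theorem~\ref{cayleyneumann} it is, up to isomorphism, a Cayley representation of $\Gamma_Z$; normalising so that $e$ carries the label $1$, its connection set is $\mc G=\{\sigma_n=\varphi^{-1}(v_n):n\in\mb Z\}$, the equality $\mc G=\mc G^{-1}$ produces the involution $\iota$ with $\sigma_n^{-1}=\sigma_{\iota(n)}$, evaluating $\sigma_ke=v_k$ gives the matrix form (\ref{gener}), and then (\ref{S}) forces (\ref{iota}). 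Hence $\wh S=\langle\sigma_n:n\in\mb Z\rangle$ is generated by the involution $\iota$, and it remains to prove the converse.

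So let $\wh S=\langle\sigma_n:n\in\mb Z\rangle$ be generated by an involution $\iota$ satisfying (\ref{iota}). By Proposition~\ref{autdg}, $\wh S\subset PGL(2,Z)=\mathrm{Aut}(\Gamma_Z)$ acts on $V(\Gamma_Z)=\mb P(Z)$ by graph automorphisms, and by Theorem~\ref{cayleyneumann} together with the Sabidussi theorem it suffices to show that this action is \emph{regular}, i.e.\ that $\psi\colon\wh S\to V(\Gamma_Z)$, $\psi(\alpha)=\alpha e$, is a bijection. Surjectivity of $\psi$ is transitivity: the neighbours of $e$ are precisely the vertices $v_n$, $n\in\mb Z$, and $\sigma_ne=v_n$, so $\wh Se$ contains the whole star of $e$; since $\Gamma_Z$ is connected, a straightforward induction on $\mathrm{dist}(e,v)$ finishes it --- if $v$ is adjacent to $w=\alpha e\in\wh Se$ then $\alpha^{-1}v$ is a neighbour $v_m$ of $e$, so $v=\alpha\sigma_me\in\wh Se$.

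Injectivity of $\psi$ is the free--action (uniqueness) part and is the heart of the matter. Each $\alpha\in\wh S$, written as a word $\sigma_{n_1}\cdots\sigma_{n_k}$ over $\mc G=\mc G^{-1}$, determines the edge--path $e,\sigma_{n_1}e,\sigma_{n_1}\sigma_{n_2}e,\dots,\alpha e$ from $e$ to $\alpha e$; conversely, any edge--path $p$ starting at $e$ \emph{develops} to an element $\Phi(p)\in\wh S$ with $\Phi(p)e$ equal to the endpoint of $p$: having developed up to a vertex $w$ with running element $\beta$ (so $\beta e=w$), an edge $w\to w'$ singles out the unique $n$ with $\beta^{-1}w'=v_n$ and one replaces $\beta$ by $\beta\sigma_n$. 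One checks that the path induced by a word for $\alpha$ develops back to $\alpha$, so $\alpha$ is recovered from $\alpha e$, and hence $\psi$ is injective, \emph{as soon as} $\Phi(p)$ depends only on the endpoint of $p$. That last point is verified against the two elementary moves relating edge--paths with common endpoints: inserting or deleting a spur $w,w',w$, and replacing two sides $w,w',w''$ of a triangle of $\Gamma_Z$ by the third side $w,w''$. For a spur the running element changes by $\sigma_n\sigma_{\iota(n)}$, which is trivial by (\ref{rel1PGl}); for a triangle, using the evaluations $\sigma_ne=v_n$, $\sigma_n^{-1}e=v_{\iota(n)}$ and $\sigma_n^{-1}v_{n+\epsilon}=v_{\iota(n)-\epsilon\delta_n}$ (all immediate from the definition (\ref{gener})), the two ways round develop to the same element precisely by relation (\ref{rel2PGl}). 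Since (\ref{rel1PGl}) and (\ref{rel2PGl}) hold in every subgroup generated by an involution, $\Phi$ is path--independent, $\psi$ is a bijection, and $\wh S$ is a Neumann subgroup.

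The step I expect to be the real obstacle --- and the one that genuinely uses the geometry of $\Gamma_Z$ rather than the matrix identities --- is the claim that the two elementary moves above already connect any two edge--paths with the same endpoints, equivalently that the $2$-complex obtained from $\Gamma_Z$ by filling every maximal clique (a triangle) with a disc is simply connected. As with the harmonic--quadruple generation used in the proof of Proposition~\ref{autdg}, this is intended to be read off the Stern--Brocot picture of $\Gamma_Z$ in Fig.~1, e.g.\ by choosing a canonical shortest path between the two endpoints and pushing any competing path onto it across triangles. The rest is bookkeeping: confirming the few matrix evaluations from (\ref{gener}) and (\ref{iota}), and checking that every element of $\wh S$ and every edge--path issuing from $e$ indeed comes from a (positive) word over $\mc G$.
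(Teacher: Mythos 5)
Your proposal is correct in outline and reaches the theorem by a genuinely different route for the hard direction. The paper also reduces everything to Theorem~\ref{cayleyneumann}, but instead of proving regularity of the action of $\wh S$ on $V(\Gamma_Z)$ directly, it works upstairs in $GL(2,\mb Z)$ and \emph{reconstructs} $\Gamma_Z$ inside the Cayley graph of $(\wh S^*,\mc G^*)$: starting from $I^*$ and the edges $\{I^*,\pm\sigma_0^*\}$, it repeatedly associates to an edge $\{\alpha^*,\alpha^*(\epsilon\sigma_k^*)\}$ the vertex $\beta^*=\alpha^*(\epsilon\sigma^*_{k+\epsilon})$, checks via (\ref{rel1Gl})--(\ref{rel2Gl}) that this is well defined, and observes the mediant identity $\alpha^*e^*+\alpha^*(\epsilon\sigma_k^*)e^*=\beta^*e^*$. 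Thus the construction reproduces the two Stern--Brocot diagrams whose gluing is $\Gamma_Z$, and $\alpha^*\mapsto\alpha^*e^*$ becomes a graph isomorphism after the contraction $\gamma^*\sim-\gamma^*$; existence and uniqueness of the element over each vertex come for free from the fact that each pair of coprime integers occurs exactly once in the Stern--Brocot recursion. Your argument splits these: surjectivity by connectivity of $\Gamma_Z$ (fine as written), and injectivity by a developing-map argument whose path-independence rests on the spur and triangle moves, which you correctly match with (\ref{rel1PGl}) and (\ref{rel2PGl}) --- note that your triangle computation uses exactly the identity $\sigma_n^{-1}\sigma_{n+1}=\sigma_{\iota(n)-\delta_n}$ recorded before the theorem, so that part is sound.

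The one load-bearing step you do not prove is the one you flag yourself: that filling every triangle of $\Gamma_Z$ yields a simply connected $2$-complex, so that the two elementary moves connect any two edge-paths with common endpoints. This claim is true (the clique complex of $\Gamma_Z$ is essentially the Farey tessellation, and your suggestion of pushing a path down onto the Stern--Brocot tree can be made precise), and it is asserted at roughly the same level of rigour as the paper's own ``by inspection of Fig.~1'' step in Proposition~\ref{autdg}; but as it stands it is an unproven lemma, and it is precisely the point the paper's mediant-identity construction is designed to avoid. If you supply that lemma, your proof is complete; it is arguably cleaner conceptually (it exhibits the relations (\ref{rel1PGl}), (\ref{rel2PGl}) as exactly the van Kampen data for the triangles of $\Gamma_Z$, which also explains why they should give a presentation of $\wh S$, as claimed without proof in the introduction), at the cost of importing a topological fact about $\Gamma_Z$ that the paper's bare-hands Stern--Brocot induction does not need.
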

\begin{proof}
It was already shown that every Neumann subgroup of $\wh M$ is always generated by an involution.

 To show the converse assume that $\wh S$ is generated by an involution $\iota$ and   let $\wh S^*=\Pi^{-1}(\wh S)$. Obviously $\wh S^*=\langle\mc G^*\rangle$, where $\mc G^*=\Pi^{-1}(\mc G)=\{\pm \sigma^*_n:\;n\in\mb Z\}=(\mc G^*)^{-1}$. We consider the (non-directed) Cayley graph $\Gamma_Z^*$ of $(\wh S^*,\mc G^*)$.\\
We will inductively build some subgraph $\tilde\Gamma_Z^*$ of $\Gamma_Z^*$. Start from $I^*$ and consider two edges $\{I^*,\epsilon \sigma^*_0\}$, $\epsilon=\pm1$. In the sequel the fact that $\{\alpha^*,\beta^*\}\in E(\Gamma_Z^*)$ will be denoted by $\alpha^*\vartriangle\beta^*$. With each of these edges associate a vertex $\epsilon \sigma^*_{\epsilon}$.   Obviously $\epsilon \sigma^*_\epsilon\vartriangle I^*$. Observe that by (\ref{S}) we have $\epsilon \sigma^*_\epsilon\vartriangle\epsilon \sigma^*_0$ as well. Moreover denoting $e^*=\scriptsize{\left[\begin{array}{c}1\\0\\\end{array}\right]}$ we have
$$I^*e^*+(\epsilon \sigma^*_0)e^*=(\epsilon \sigma^*_{\epsilon})e^*.$$
\\
Now we describe the general step of induction. To each edge  $\{\alpha^*,\alpha^*(\epsilon \sigma^*_k)\}$ obtained  in the previous step associate  the vertex $\beta^*=\alpha^*(\epsilon \sigma^*_{k+\epsilon})\vartriangle \alpha^*$. \\
Observe that although the definition of $\beta^*$ is not symmetric, the vertex $\beta^*$ itself  is in fact independent on the order of defining vertices. Indeed,
to the ordered edge  $(\alpha^*(\epsilon \sigma^*_k),\alpha^*)=(\alpha^*(\epsilon \sigma^*_k),\alpha^*(\epsilon \sigma^*_k)(-\epsilon\delta_k\sigma^*_{\iota(k)}))$
 (we use here (\ref{rel1Gl})) we  associate the vertex
 $\alpha^*(\epsilon \sigma^*_k)(-\epsilon\delta_k\sigma^*_{\iota(k)-\epsilon\delta_k})$ that is by (\ref{rel2Gl}) equal to $\beta^*$.
 Of course $\beta^*\vartriangle \alpha^*(\epsilon \sigma^*_k)$. Moreover it can be  easily checked that
\begin{equation}\label{isogr}
\alpha^*e^*+\alpha^*(\epsilon \sigma^*_k)e^*=\beta^*e^*.
\end{equation}
This simply means that first columns of vertices of the starting edge sum up to a first column of the associate  vertex.\\
Starting from $\epsilon=1$ and then from $\epsilon=-1$ we build two subgraphs of $\Gamma_Z^*$. Each of them forms the Stern-Brocot diagram of slopes  with the sign $\epsilon$. They have in common the vertex $I^*$ and together give the graph $\tilde\Gamma_Z^*$. Consider a contraction of $\tilde\Gamma_Z^*$ via $\sigma^*_0\sim_0-\sigma^*_0$ that gives the graph $\tilde\Gamma_Z^*/\sim_0$.  From (\ref{isogr}) it follows immediately that the map $\alpha^*\mapsto\alpha^*e^*$ gives the graph isomorphism between $\tilde\Gamma_Z^*/\sim_0$ and $\Gamma_Z$. Now observe that by construction each vertex $\alpha^*$ in $\tilde\Gamma_Z^*/\sim_0$ has a neighborhood consisting vertices $\epsilon_n\alpha^*\sigma^*_n$, $n\in\mb Z$, $\epsilon_n\in\{-1,1\}$. From this it follows that after contracting $\Gamma_Z^*$ via $\gamma^*\sim-\gamma^*$ we get the graph $\Gamma_Z^*/\sim$ isomorphic to $\tilde\Gamma_Z^*/\sim_0$ and on the other hand to $\Gamma_Z(\mc G,\mc S)$. Therefore $\Gamma_Z(\mc G,\mc S)$ and $\Gamma_Z$ are isomorphic, thus $\wh S$ is a Cayley representation of $\Gamma_Z$, hence, by Theorem~\ref{cayleyneumann}, a Neumann subgroup of $\wh M$,

We have completed our proof.
\end{proof}

We finish this section with the minor remark about another possible definition of a Neumann subgroup of $\wh M$. Let us denote $\wh T=\langle \tau,\nu\rangle$ and assume that $\wh S$ is a Neumann subgroup of $\wh M$. We have $\wh S\cap\wh T=\{I\}$ since $\tau^n(\infty)=\tau^n\nu(\infty)=I(\infty)$. By definition for each $\alpha \in \wh M$ there is $\beta\in \wh S$  with $\alpha(\infty)=\beta(\infty)$. Obviously $\beta^{-1}\alpha\in \wh T$, thus we get $\wh S\wh T=\wh M$. Now assume that $\wh S$ satisfies
\begin{enumerate}
  \item $\wh S\wh T=\wh M$ and
  \item $\wh S\cap\wh T=\{I\}$.
  \end{enumerate}
In other words above conditions say that the members of $\wh T$  form a complete system of distinct  right coset representatives of $\widehat S$ in $\widehat M$.\\
Now let us fix $r\in \bar Q$ and take an arbitrary $\beta\in\wh M$  with $\beta(\infty)=r$. Form 1. we have $\alpha\in\wh M$ and $\gamma\in\wh T$ such that $\beta=\alpha\gamma$, hence $\alpha(\infty)=\beta(\gamma^{-1}(\infty))=\beta(\infty)=r$. Further, if $\alpha$, $\alpha'\in\wh S$ are with $\alpha(\infty)=\alpha'(\infty)$ then $\alpha'\alpha^{-1}(\infty)=\infty$, thus $\alpha^{-1}\alpha'\in\wh S\cap\wh T$. By 2. $\alpha=\alpha'$. We have shown that the conditions 1. and 2. can be taken as an alternative definition of the notion of a Nuemann subgroup of $\wh M$ (compare  the definition of a Neumann subgroup of $M$ from \cite{BL1} or \cite{BL2}).

\section{Structure of Neumann subgroups of the extended modular group}
In this section we provide two theorems that completely describe structure of Neumann subgroups of the extended modular group. The proof of the first one can be found in either \cite{BL1} or derived from \cite{S}. The second theorem is new but we postpone its proof to the forthcoming paper since it requires methods we do not develop in this paper. The following theorem describes the structure of Neumann subgroups of the modular group. From  the Kurosh Subgroup Theorem a subgroup $S$ of the modular group is a free product of $ r_2$ subgroups of order $2$, $r_3$ subgroups of order $3$ and $r_\infty$ of infinite cyclic subgroups. The fact that any group $H$ is such a free product we express saying that $H$ has $(r_2,r_3,r_\infty)$-struc\-ture.
\begin{thm}[\cite{BL1},\cite{S}] \label{N}
If $S$ is a Neumann subgroup of $M$  then $S$ has $(r_2,r_3,$ $r_\infty)$-struc\-ture subject to the conditions that  $r_2+r_3+r_\infty=\infty$ and if $r_\infty$ is finite then its even. Moreover every structure satisfying the above conditions  is realized by some Neumann subgroup of $M$.
\end{thm}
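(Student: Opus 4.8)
The plan is to prove both directions of Theorem~\ref{N} by translating everything into the language of Neumann subgroups generated by involutions, exactly as developed in Section~2 but restricted to the case $\widehat S \subset M$, i.e.\ $\delta_n = 1$ for all $n$. Under that restriction the defining identity \eqref{iota} collapses to the purely combinatorial condition $\iota(\iota(n)-1) = \iota(n+1)+1$ on an involution $\iota$ of $\mathbb Z$, and the generators \eqref{gener} satisfy the relations \eqref{rel1PGl}, \eqref{rel2PGl}, which (as remarked in the text) present $S$. So the structural problem becomes: given the presentation $\langle \sigma_n \mid \sigma_n\sigma_{\iota(n)} = \sigma_n\sigma_{\iota(n)-1}\sigma_{\iota(n+1)} = 1\rangle$, read off the free-product decomposition, and conversely realize any admissible triple $(r_2,r_3,r_\infty)$ by an explicit involution.

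For the forward direction I would first apply the Kurosh Subgroup Theorem to $S \le M = C_2 * C_3$ to know a priori that $S$ is a free product of copies of $C_2$, $C_3$ and $\mathbb Z$; the work is to see which numbers can occur. Here I would run the Reidemeister--Schreier procedure using the coset representatives furnished by condition~(2) of the main description (in the modular case the transversal is $\{\tau^n : n\in\mathbb Z\}$), turning the relators $\omega^2$ and $(\omega\tau)^3$ of $M$ into the relators \eqref{rel1PGl}--\eqref{rel2PGl} above. Each orbit of $\iota$ on $\mathbb Z$ of a fixed point type contributes a $C_2$ factor (via $\sigma_n^2 = 1$ when $\iota(n)=n$), each $3$-cycle arising among the ternary relators contributes a $C_3$, and the remaining generators, after Tietze reduction, are free; this is where the two identities defining $\iota$ get used to match things up. The count $r_2+r_3+r_\infty = \infty$ follows because the transversal $\{\tau^n\}$ is infinite so $[M:S] = \infty$ and hence $S$ has infinite rank; the parity statement ($r_\infty$ even when finite) comes from the fact that $M^{\mathrm{ab}} \cong C_6$ while $S^{\mathrm{ab}}$ must admit $M^{\mathrm{ab}}$ as a quotient of a finite-index-related type, forcing the free rank contribution to be even—more precisely one tracks the Euler characteristic / the image of $S$ in $M^{\mathrm{ab}}$ and the fact that each $\mathbb Z$-factor of negative-type pairs up.

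For the realization direction, given $(r_2,r_3,r_\infty)$ with $r_2+r_3+r_\infty = \infty$ and $r_\infty$ even if finite, I would construct an explicit involution $\iota$ of $\mathbb Z$ satisfying $\iota(\iota(n)-1) = \iota(n+1)+1$ whose associated group realizes the prescribed structure: one builds $\mathbb Z$ as a disjoint union of ``blocks,'' a fixed point for each desired $C_2$, a suitable short pattern for each $C_3$, and a pair of infinite arithmetic-progression blocks for each desired pair of $\mathbb Z$-factors (and an infinite tail to absorb the remaining infinitely many generators), checking in each case that the local compatibility equation holds at the block boundaries. This is the construction already alluded to from \cite{MS3}, so I would cite that involution directly rather than rebuild it.

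The main obstacle I anticipate is the bookkeeping in the Reidemeister--Schreier step: correctly identifying, among the doubly-indexed family of relators \eqref{rel1PGl}--\eqref{rel2PGl}, a minimal generating set and a complete set of defining relations, and then proving that after Tietze transformations exactly $r_2$ order-$2$ generators, $r_3$ order-$3$ generators and $r_\infty$ free generators survive with no hidden relations among the latter. Establishing that the leftover generators generate a genuinely \emph{free} group (rather than merely a group with no short relators) is the delicate point, and it is precisely the place where one must invoke the Kurosh theorem to guarantee freeness a priori and then merely count ranks—consistent with the authors' statement that they omit the details and refer to \cite{BL1} and \cite{S}.
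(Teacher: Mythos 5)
First, a point of reference: the paper does not prove Theorem~\ref{N} at all --- it is imported from \cite{BL1} and \cite{S} (``The proof of the first one can be found in either \cite{BL1} or derived from \cite{S}''), so there is no in-paper argument to compare against. Your skeleton --- Reidemeister--Schreier over the transversal $\{\tau^n\}$ to turn $\omega^2$ and $(\omega\tau)^3$ into the relators (\ref{rel1PGl})--(\ref{rel2PGl}), Tietze reduction to exhibit a free-product basis, and realization of any admissible triple by an explicit involution built from blocks (which is exactly what the paper's Theorem~\ref{str} does in the extended setting, and \cite{MS3} in the modular one) --- is the right strategy and is essentially the Brenner--Lyndon route.

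However, both numerical constraints, which are the actual content of the theorem, rest on arguments that do not work. (1) ``The transversal is infinite, so $[M:S]=\infty$, hence $S$ has infinite rank'' is false as a general implication: $\langle\tau\rangle$ has infinite index in $M$ and rank $1$. For a Neumann subgroup one must argue that a finite-rank free product inside $M$ cannot act transitively on $\bar Q$ (its orbit of $\infty$ misses some rational), or extract the infinitude directly from the Tietze reduction of the $\sigma_n$'s; the one-line deduction from infinite index is a gap. (2) The parity of $r_\infty$ cannot come from abelianizations: $S$ has infinite index in $M$, so there is no transfer, Euler-characteristic or index formula tying $S^{\mathrm{ab}}$ to $M^{\mathrm{ab}}\cong C_6$, and ``each $\mathbb Z$-factor of negative type pairs up'' is vacuous here because in the modular case every $\sigma_n$ has determinant $1$. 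The evenness is a combinatorial fact about involutions satisfying (\ref{iota}) with $\delta\equiv 1$: the hyperbolic free generators arise in nested pairs (compare Case~3 of the proof of Theorem~\ref{str}, where a single block delivers exactly two free generators of determinant $1$), and establishing that pairing is precisely the missing work. Until (1) and (2) are supplied the forward direction is not proved; the realization direction as you sketch it (blocks of types 1, 2 and 3 assembled by the $\sqcup$-construction of \cite{MS3}) is sound.
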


If we assume that a Neumann subgroup of $\wh M$ is not contained in $M$ then the situation becomes essentially more complicated and we describe it in the theorem below. As was announced in Introduction we postpone the proof to the forthcoming paper. We only make here some remarks on a number of independent generators in groups $\wh S$ and $S=\wh S\cap M$.\\
It is  easy to observe that the set $$\{\sigma_n, \alpha\sigma_n\alpha^{-1}: \; det\,\sigma_n=1\}\cup\{\alpha\sigma_n,\,\sigma_n\alpha^{-1}:\; det\,\sigma_n=-1\},$$
where $\alpha \in\wh S$ is an arbitrary element of the determinant equal to $-1$, generates $S$.  Assume now that $\alpha$ is taken to be equal to some of $\sigma_n$'s and the set $D$ of generators of $\wh S$ with the negative determinant is finite. Then the set $\alpha D\cup D\alpha^{-1}\setminus \{I\}$ has odd  cardinality. Moreover all other generators are doubling. We are not able to prove the below theorem by this method since it does not allow to prove that there is a subset of $\{\sigma_n\}$ of independent generators (which actually is true). This is just a hint regarding statements about cardinality of the sets of different kinds of generators.

 Recall that $\wh r_2$, $\wh r_3$ denote the number of independent generators of order $2$ and $3$, respectively, and that  $\wh r^\pm_\infty$denote the number of free generators with the determinant equal to $\pm1$, $\wh r_\infty=\wh r^+_\infty+\wh r^-_\infty$.
\begin{thm}\label{NnN}
 Let  $\wh S$ be a Neumann subgroup of $\wh M$ that is not entirely contained in $M$ and let  $S=\wh S\cap M$. Then $S\lhd\wh S$, $\wh S/S\simeq C_2$ and $\wh S$ is never a semi-direct product of $S$ and a subgroup isomorphic to $C_2$. Moreover
  \begin{itemize}
  \item $\wh S$ has $(\wh r_2,\wh r_3,\wh r_\infty)$-structure subject to the conditions that
      \begin{enumerate}
      \item $\wh r_2+\wh r_3+\wh r_\infty=\infty$,
      \item $\wh r^-_\infty\geq1$ and if $\wh r^+_\infty$ is finite then its even,
      \item $\wh r_2+\wh r_3+\frac{\wh r^+_\infty}{2}\geq\wh r^-_\infty$ and all independent generators of finite order are elliptic;
            \end{enumerate}
  \item if $S$ has $(r_2, r_3,r_\infty)$-structure  then the following equalities hold
      $$(r_2, r_3,r_\infty)=(2\wh r_2,2\wh r_3,2\wh r_\infty-1).$$
      \end{itemize}
\end{thm}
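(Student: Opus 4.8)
\medskip
\noindent\emph{Plan of proof.}
The idea is to read off the structure of $\wh S$, and of $S$, from their actions on a tree attached to $\wh M$, and then to relate the two by a double covering of graphs. Recall that $\wh M\cong PGL(2,\mb Z)$ is the triangle group $\Delta(2,3,\infty)\cong(C_2\times C_2)*_{C_2}S_3$; its Bass--Serre tree $T$ may be taken to be the incidence (``flag'') graph of $\Gamma_Z$, which one checks from the Stern--Brocot construction to be a tree: its type-I vertices are the edges of $\Gamma_Z$ (each lying in exactly two maximal cliques, so of valence $2$ in $T$, with $\wh M$-stabiliser $\cong C_2\times C_2$), its type-II vertices are the maximal cliques (valence $3$, stabiliser $\cong S_3$), and its edges are the flags ``$\mathrm{edge}\subset\mathrm{clique}$'' (stabiliser $\cong C_2$, generated by a reflection). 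The first, easy observations: since $\wh S$ acts freely on $\bar Q=V(\Gamma_Z)$ (Theorem~\ref{cayleyneumann}) while every element of $PGL(2,\mb Z)$ of order $2$ and determinant $-1$ has eigenvalues $\pm1$ and hence fixes two points of $\bar Q$, the group $\wh S$ contains no such reflection; consequently every torsion element of $\wh S$ is elliptic and lies in $S=\wh S\cap M$. This already gives the clause ``all independent generators of finite order are elliptic''; and, as $M\lhd\wh M$ with $\wh M/M\cong C_2$ and $\wh S\not\subset M$, it gives $S\lhd\wh S$, $\wh S/S\cong C_2$, together with the fact that $\wh S$ is not a semidirect product $S\rtimes C_2$ (a complement would be a reflection). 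Finally $\det\colon\wh S\to\{\pm1\}$ annihilates all torsion, so it is detected only on the free part, which forces $\wh r^-_\infty\ge1$.

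For the $(\wh r_2,\wh r_3,\wh r_\infty)$-structure I would apply the Bass--Serre/Kurosh subgroup theorem to $\wh S\le\wh M$, presenting $\wh S=\pi_1(\mc G)$ for a graph of groups $\mc G$ over $Y:=\wh S\backslash T$. The freeness of the $\wh S$-action on $\bar Q$ forces all \emph{edge} groups of $\mc G$ to be trivial (a flag-stabilising reflection fixes cusps), every type-I vertex group to be trivial or the $C_2$ generated by the (elliptic, determinant-$1$) involution interchanging the two ends of the corresponding edge of $\Gamma_Z$, and every type-II vertex group to be trivial or a $C_3$ acting as a $3$-cycle on the clique; hence $\wh S$ is the free product of these $C_2$'s, these $C_3$'s and a free group of rank $\beta_1(Y)$, i.e.\ $\wh r_2=\#\{C_2\text{-vertices of }Y\}$, $\wh r_3=\#\{C_3\text{-vertices of }Y\}$, $\wh r_\infty=\beta_1(Y)$. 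In $Y$ the type-I vertices have valence $\le2$ and the type-II ones valence $1$ or $3$, and by the orbit count of Section~2 (the $\wh S$-orbits of edges of $\Gamma_Z$ are indexed by $\mb Z/(n\sim\iota(n))$, an infinite set) $Y$ has infinitely many type-I vertices; a short Euler-characteristic estimate for graphs whose interior vertices have valence $\ge3$ then forces $\wh r_2+\wh r_3+\wh r_\infty=\infty$. (The same analysis can be carried out algebraically by Tietze-transforming the presentation $\wh S=\langle\sigma_n\mid\dots\rangle$ of the Introduction --- eliminate one $\sigma$ from each pair $\{n,\iota(n)\}$ via $(\ref{iota})$ and classify the surviving relators --- which is the coset-graph bookkeeping in disguise.)

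To compare $S$ with $\wh S$, consider $Y':=S\backslash T\to Y=\wh S\backslash T$. Since no vertex- or edge-stabiliser of $T$ inside $\wh S$ contains a determinant-$(-1)$ element, the deck transformation induced by a fixed $\alpha\in\wh S$ of determinant $-1$ acts freely on $Y'$, so $Y'\to Y$ is an honest $2$-fold covering of the connected graph $Y$, and $S$ admits the same kind of graph-of-groups description over $Y'$. Each $C_2$- (resp.\ $C_3$-) vertex of $Y$ then has two $C_2$- (resp.\ $C_3$-) preimages --- the relevant involution and $3$-cycle have determinant $1$ and survive in $S$ --- giving $r_2=2\wh r_2$, $r_3=2\wh r_3$, while $\beta_1(Y')=2\beta_1(Y)-1$ (from $\chi(Y')=2\chi(Y)$) gives $r_\infty=2\wh r_\infty-1$; in particular $\wh r_\infty=\infty$ always, by Theorem~\ref{N} applied to $S$. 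It remains to establish (ii) and (iii). The parity ``$\wh r^+_\infty$ finite $\Rightarrow$ even'' and the inequality $\wh r_2+\wh r_3+\tfrac12\wh r^+_\infty\ge\wh r^-_\infty$ are to be read off from how $\det$ (equivalently, the deck involution) acts on a geometric basis of $H_1(Y)$ together with the $C_2$-leaves: $\wh r^-_\infty$ is the number of independent loops of $Y$ on which $\det$ is non-trivial, determinant-$(+1)$ loops lift in pairs (whence the parity, as for $S$ in Theorem~\ref{N}), and closing such a loop-and-leaf configuration up while keeping only finitely many determinant-$(+1)$ free generators is possible exactly under the stated balance. Making this last matching argument precise --- the exact combinatorics of loops, $C_2$-leaves and the deck involution on $Y$, and the proof that the balance is not merely necessary but sufficient --- is, I expect, the main obstacle, and it is just where the coset-graph method the authors defer is needed.

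For the converse I would realise each admissible quadruple $(\wh r_2,\wh r_3,\wh r^+_\infty,\wh r^-_\infty)$ by the block construction of an involution of $\mb Z$ from \cite{MS3}: partition $\mb Z$ into disjoint finite intervals, assign to each a ``gadget'' contributing a $C_2$-factor, a $C_3$-factor, a determinant-$(+1)$ infinite-cyclic factor or a determinant-$(-1)$ infinite-cyclic factor, define $\iota$ and $(\delta_n)$ on each, and check $(\ref{iota})$ across the seams. Conditions (i)--(iii) are precisely what allows laying out infinitely many gadgets consistently --- at least one determinant-$(-1)$ gadget, an even number of determinant-$(+1)$ ones when these are finite, and the balance (iii) needed for the seams of the determinant-$(-1)$ gadgets to close while forcing $S=\wh S\cap M$ into the $(2\wh r_2,2\wh r_3,2\wh r_\infty-1)$-shape above. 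Theorem~\ref{neumanninv} then turns $\iota$ into a Neumann subgroup with the prescribed invariants, which are visible directly from the graph $Y$ of the construction; the only real work is the design of the determinant-$(-1)$ gadgets so that the induced covering $Y'\to Y$ produces the required $S$.
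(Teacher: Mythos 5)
The first thing to note is that the paper does not prove Theorem~\ref{NnN}: the authors explicitly postpone the proof to a forthcoming paper, recording only the generating set $\{\sigma_n,\alpha\sigma_n\alpha^{-1}:\det\sigma_n=1\}\cup\{\alpha\sigma_n,\sigma_n\alpha^{-1}:\det\sigma_n=-1\}$ of $S$ as a hint, so there is no in-paper proof to compare yours with; their intended tool is the coset-graph (Reidemeister--Schreier) method, of which your Bass--Serre picture on the flag graph of $\Gamma_Z$ is a geometric reformulation. Within your plan, several clauses are correctly and essentially completely handled: $S\lhd\wh S$ with $\wh S/S\simeq C_2$ is immediate from $M\lhd\wh M$; your observation that an order-two element of determinant $-1$ has eigenvalues $\pm1$ with primitive integral eigenvectors, hence fixes two vertices of $\Gamma_Z$ and cannot lie in the freely acting $\wh S$, correctly rules out a complement of $S$ and shows that all torsion of $\wh S$ is elliptic and lies in $S$; since $\det$ then annihilates torsion, $\wh r^-_\infty\ge1$ follows; the trivial edge groups, the $C_2$/$C_3$ vertex groups, the identification $\wh r_\infty=\beta_1(Y)$, the Euler-characteristic argument for $\wh r_2+\wh r_3+\wh r_\infty=\infty$ (infinitely many valence-$\le2$ type-I vertices of $Y$ against finitely many leaves and independent cycles), and the free double cover $Y'\to Y$ yielding $(r_2,r_3,r_\infty)=(2\wh r_2,2\wh r_3,2\wh r_\infty-1)$ all go through, modulo the routine verification that the flag graph of $\Gamma_Z$ really is the Bass--Serre tree of $\wh M\simeq(C_2\times C_2)*_{C_2}S_3$.

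The genuine gap is exactly where you place it, and it cannot be waved away: the parity statement for $\wh r^+_\infty$ and the inequality $\wh r_2+\wh r_3+\tfrac{\wh r^+_\infty}{2}\ge\wh r^-_\infty$ are the only assertions that go beyond a transfer of Theorem~\ref{N}, and your sentence about reading them off from ``how $\det$ acts on a geometric basis of $H_1(Y)$ together with the $C_2$-leaves'' is a restatement of the claim, not an argument. The difficulty is that $\det$ is a homomorphism $\wh S\to\{\pm1\}$ while $\wh r^\pm_\infty$ depend on a choice of free basis, so one needs a Nielsen-type normalization showing that some basis of independent generators realizes the asserted counts and that every such basis obeys the constraints. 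The inequality amounts to a matching lemma --- each determinant-$(-1)$ free generator must be pairable with a torsion free factor or with two determinant-$(+1)$ free generators, which is visible in the building involutions 4--6 of Theorem~\ref{str} --- but you supply no mechanism forcing such a pairing for an arbitrary Neumann subgroup, and this is precisely the content the authors defer to the coset-graph method. Until that lemma is stated and proved, condition 2 (its parity half) and condition 3 remain unestablished; your final paragraph on realizability addresses Theorem~\ref{str} rather than the statement under review and does not fill this gap.
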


Analogously to the situation in Theorem~\ref{N} it is possible to realize any admissible structure for a Neumann subgroup of the extended modular group. Recall that in order to describe some Neumann subgroup it is enough to define appropriate involution $\iota$ of the set of integers. The recursive method of such a construction is given in \cite{MS3}.  For the sickness of completeness we describe this method below.\\
 A map $\tilde\iota:\{k,\ldots,k+l\}\longrightarrow\{k,\ldots,k+l\}$, $k\in \mathbb Z$, $l\geq 0$, is called a \emph{building involution} if:
\begin{itemize}
  \item $\tilde\iota$ is an involution of $\{k,\ldots,k+l\}$;
  \item $\tilde\iota(k)=k+l$;
  \item $\tilde\iota$ satisfies (\ref{iota}) for each $n=k,\ldots,k+l-1$.
\end{itemize}
 Note that we can freely shift the domain of a building involution along  integers.
 Given two building involutions one may construct another one. Indeed, let $$\iota_j:\{k_j,\ldots,k_j+l_j\}\longrightarrow\{k_j,\ldots,k_j+l_j\},$$ $j=0,1$ be building involutions with $k_1=k_0+l_0+1$.
 Define $$\tilde\iota=\iota_0\sqcup\iota_1:\{k_0-1,\ldots,k_1+l_1+1\}\longrightarrow\{k_0-1,\ldots,k_1+l_1+1\}$$ by
\begin{itemize}
  \item $\tilde\iota(k_0-1)=k_1+l_1+1$;
    \item $\tilde\iota|_{\{k_j,\ldots,k_j+l_j\}}=\iota_j$, $j=0,1$.
\end{itemize}

Let us choose   a sequence  of building involutions $\iota_n:\{k_n,\ldots,k_n+l_n\}\longrightarrow\{k_n,\ldots,k_n+l_n\}$, $n\in \mathbb N$ satisfying $k_0=-1$, $k_1=l_0$, $k_{n+1}=k_n+l_n+2$ for $n\geq1$. We define an involution  $\iota=\bigsqcup_{n=0}^\infty\iota_n:\mathbb Z\longrightarrow\mathbb Z$ as a "limit" construction:
\begin{equation}\label{siota}
  \iota_0,\;\iota_0\sqcup\iota_1,\;\iota_0\sqcup\iota_1\sqcup\iota_2,\, \ldots\, ,\iota_0\sqcup\iota_1\sqcup\iota_2\sqcup\ldots\sqcup\iota_n,\,\ldots\;,
\end{equation}
i.e. we require that $\iota|_{\{-n-1,\ldots,k_{n}+l_{n}+1\}}=\iota_0\sqcup\iota_1\sqcup\iota_2\sqcup\ldots\sqcup\iota_{n}$ for each $n\geq1$.
If we assume that $\delta_{k_n}=1$ for all $n$ then it follows immediately from the construction that such defined $\iota$ satisfies (\ref{iota}).
\begin{thm}\label{str}
Let $\wh r_2$, $\wh r_3$, $\wh r^-_\infty\in\{0,1,\ldots\}\cup\{\infty\}$, $\wh r^+_\infty\in\{0,2,\ldots\}\cup\{\infty\}$ satisfy  the conditions $1.$, $2.$ and $3.$ of Theorem~\ref{NnN}. Then there is a Neumann subgroup $\wh S<\wh M$ such that $\wh S\setminus M\neq\emptyset$ and $\wh S$ has $(\wh r_2,\wh r_3,\wh r^-_\infty+\wh r^+_\infty)$-structure.
\end{thm}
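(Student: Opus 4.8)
The plan is to realize every admissible tuple by an explicitly constructed involution $\iota$ of $\mathbb{Z}$, assembled out of building involutions as described above, and then to invoke Theorem~\ref{neumanninv}. Since a subgroup of $\wh M$ generated by an involution is automatically a Neumann subgroup, and fails to lie inside $M$ as soon as one of the signs $\delta_n$ equals $-1$, it suffices to produce a sequence $(\iota_n)_{n\geq0}$ of building involutions --- each beginning with $\delta=1$, so that the ``limit'' construction applies and the resulting $\iota$ satisfies (\ref{iota}) on all of $\mathbb{Z}$ --- whose amalgam $\iota=\bigsqcup_{n\geq0}\iota_n$ yields a group $\wh S=\langle\sigma_n:n\in\mathbb{Z}\rangle$ having exactly the prescribed $(\wh r_2,\wh r_3,\wh r^-_\infty+\wh r^+_\infty)$-structure.

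The first step is to catalogue a short list of \emph{atomic} building involutions, one per type of free generator, reading off from (\ref{gener}) the determinant and order of the matrix $\sigma_n^*$: the one-point block $\tilde\iota(k)=k$ (where necessarily $\delta_k=1$) produces $\sigma_k^*$ with $\det=1$ and $(\sigma_k^*)^2=-I^*$, an elliptic free generator of order $2$; the transposition block $\tilde\iota(k)=k+1$, whose sign pattern is forced by (\ref{iota}) to be $\delta_k=\delta_{k+1}=1$, produces $\sigma_k^*$ with $\det=1$ and $(\sigma_k^*)^3=I^*$, an elliptic free generator of order $3$; the length-three block $\tilde\iota=(k\;k+3)(k+1\;k+2)$, whose sign pattern is forced by (\ref{iota}) to be $(\delta_k,\dots,\delta_{k+3})=(1,-1,-1,1)$, makes $\sigma_k=\sigma_{k+1}^2$ by (\ref{rel2PGl}) and hence contributes a single free generator $\sigma_{k+1}$ of determinant $-1$ and of infinite order; and, exactly as in the construction of Neumann subgroups of $M$ in \cite{BL1} and \cite{MS3}, there is a block contributing a \emph{pair} of infinite determinant-$+1$ free generators, while no atomic block contributes just one such generator --- which is the source of the evenness in condition $2.$ Using (\ref{rel1PGl})--(\ref{rel2PGl}) one checks which $\sigma_n$'s are redundant inside each block, so that a free basis of $\wh S$ is got by collecting one generator from each atomic block, once the ``wrap-around'' pairings $\iota(-m-1)=k_m+l_m+1$ forced by $\sqcup$ are also taken into account.

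The second step is the assembly. Given admissible $\wh r_2,\wh r_3,\wh r^\pm_\infty$, one concatenates $\wh r_2$ one-point blocks, $\wh r_3$ transposition blocks, $\wh r^-_\infty$ length-three blocks and $\wh r^+_\infty/2$ pair-blocks --- reading a count equal to $\infty$ as ``infinitely many, interleaved cofinally'' --- placing each length-three block next to one of the elliptic blocks or one half of a pair-block, so that its unwanted determinant-$+1$ companion is absorbed locally. Condition $2.$ makes $\wh r^+_\infty/2$ meaningful; condition $1.$ guarantees that at least one count is infinite, so that $(\iota_n)$ can be completed to an infinite sequence; and condition $3.$ is precisely what guarantees that enough elliptic-or-pair partners are available for the $\wh r^-_\infty$ determinant-$-1$ blocks, so that the bookkeeping of the previous paragraph closes up. All gluings are legal because every atomic block begins with $\delta=1$. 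Passing to the limit produces $\iota\colon\mathbb{Z}\to\mathbb{Z}$ satisfying (\ref{iota}), whence $\wh S:=\langle\sigma_n\rangle$ is generated by an involution, so by Theorem~\ref{neumanninv} it is a Neumann subgroup of $\wh M$, and $\wh S\setminus M\neq\emptyset$ because some $\delta_n=-1$.

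The main obstacle is the last verification: that the generators harvested from the separate blocks are genuinely independent in $\wh S$, and that gluing the blocks and adjoining the wrap-around pairings introduces no relations beyond (\ref{rel1PGl})--(\ref{rel2PGl}) --- in other words, that $\wh S$ has \emph{precisely} the advertised free-product structure, with conditions $1.$--$3.$ being the exact obstructions and not merely sufficient for the construction to run. This is in essence the content of Theorem~\ref{NnN}, which is most cleanly handled by the coset-graph (Bass--Serre) analysis that the authors postpone to the forthcoming paper; a self-contained substitute would be a direct, if laborious, Reidemeister--Schreier computation on the presentation $\wh S=\langle\sigma_n\mid\sigma_n\sigma_{\iota(n)}=\sigma_n\sigma_{\iota(n)+\delta_n}\sigma_{\iota(n-1)}=1\rangle$, using the Schreier transversal $\{\tau^m,\tau^m\nu:m\in\mathbb{Z}\}$ provided by condition $(2)$ of the equivalence stated in the Introduction --- but carrying the redundancy bookkeeping correctly through the whole infinite concatenation is the delicate part.
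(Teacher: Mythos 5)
Your overall strategy---catalogue building involutions, concatenate them with the $\sqcup$ operation, pass to the limit, and invoke Theorem~\ref{neumanninv}---is exactly the paper's, and your blocks for generators of order $2$ and $3$ coincide with the paper's Cases~1 and~2. But the substance of the paper's proof is precisely the part you defer. For each block one must write out the relations $R(j):\sigma_{k+j}=\sigma_{k+j+1}\sigma_{\iota(k+j+1)+\delta_{k+j+1}}$ and $I(j):\sigma_{k+j}\sigma_{\iota(k+j)}=1$ coming from (\ref{rel1PGl})--(\ref{rel2PGl}), eliminate redundant generators and relations by explicit Tietze moves, and check that what survives is a free set of the advertised type; one must then run an induction on the concatenation showing that each wrap-around generator satisfies $\sigma_{-n-2}=\sigma_{-n-1}\sigma_{k_{n+1}}$ and hence contributes no new free factor. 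You name this verification as ``the delicate part'' and propose to outsource it to Theorem~\ref{NnN} or to an unperformed Reidemeister--Schreier computation; but Theorem~\ref{NnN} only constrains which structures can occur---it cannot tell you which structure your particular $\iota$ actually realizes. As written, the proposal is a correct plan with the execution missing, and the execution is the proof.

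There is also a concrete discrepancy in your block list. Your four-point block $(k\;k{+}3)(k{+}1\;k{+}2)$ with signs $(1,-1,-1,1)$ is indeed a legitimate building involution, and it does give $\sigma_k=\sigma_{k+1}^2$ with $\sigma_{k+1}$ of determinant $-1$ and infinite order; but then it delivers a determinant $-1$ free generator \emph{with no companion}, so your instruction to place each such block ``next to an elliptic block or half a pair-block so that its unwanted determinant $+1$ companion is absorbed'' refers to a companion your own block never produces, and condition~3 then plays no role whatsoever in your assembly---which should have been a warning sign that the bookkeeping had gone astray. The paper instead uses only blocks (its Cases~4, 5, 6, of lengths $7$, $8$ and $16$) in which every determinant $-1$ free generator arrives bundled with a generator of order $2$, a generator of order $3$, or two determinant $+1$ free generators, so that condition~3 is exactly what makes the inventory close up. Finally, you never exhibit the pair-block delivering two determinant $+1$ free generators (the paper's Case~3, a ten-point involution with a nontrivial redundancy analysis); you only assert its existence by analogy with \cite{BL1} and \cite{MS3}.
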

\begin{proof}
First we define the six building involutions  with some properties to describe in a moment and the terms of the required sequence of building involutions will be taken from among those six ones. We require that having chosen a finite sequence of the building involutions  the next one to be choose delivers some generators  independent of generators brought by previously chosen involutions. In that way we assure that the constructed group will be a free product. Then we have to choose the six building involutions such that each of them brings appropriate independent generators. We decide to take the following building involutions (all are denoted by the same symbol $\iota$):
\begin{enumerate}
\item $\iota:\{k\}\longrightarrow\{k\}:\;\;\;\iota(k)=k$, $det\,\sigma_k=1$;\\ -- delivers a generator of order $2$;\vspace{2mm}
\item $\iota:\{k,k+1\}\longrightarrow\{k,k+1\}:\;\;\;\iota(k)=k+1$, $det\,\sigma_k=det\,\sigma_{k+1}=1$;\\ -- delivers a generator of order $3$;\vspace{2mm}
\item $\iota:\{k,\ldots,k+9\}\longrightarrow\{k,\ldots,k+9\}$:
\begin{center} \begin{tabular}{c|c|c|c|c|c}
   $n$ & $k$& $k+1$ & $k+2$ & $k+3$  & $k+5$  \\ \hline
    $ \iota(n)$ & $k+9$  & $k+4$  & $k+6$ & $k+7$ & $k+8$\\
     \end{tabular}
    \end{center}
    $det\,\sigma_{k+j}=1$;\\ -- delivers two free generators both of the determinant $1$;\vspace{2mm}
\item $\iota:\{k,\ldots,k+6\}\longrightarrow\{k,\ldots,k+6\}$:
\begin{center} \begin{tabular}{c|c|c|c|c }
   $n$          &  $k$  & $k+1$ & $k+2$ & $k+3$ \\ \hline
    $ \iota(n)$ & $k+6$ & $k+4$ & $k+2$ &  $k+5$  \\
     \end{tabular}
\end{center}
  $det\,\sigma_{k+j}=-1$ iff $j=1,3,4,5$;\\ -- delivers a free generator of the determinant $-1$ and a generator of order $2$;\vspace{2mm}
\item $\iota:\{k,\ldots,k+7\}\longrightarrow\{k,\ldots,k+7\}$:
\begin{center} \begin{tabular}{c|c|c|c|c }
   $n$          &  $k$  & $k+1$ & $k+2$ & $k+4$  \\ \hline
    $ \iota(n)$ & $k+7$ & $k+5$ & $k+3$ & $k+6$   \\
     \end{tabular}
\end{center}
 $det\,\sigma_{k+j}=-1$ iff $j=1,4,5,6$;\\ -- delivers a free generator of the determinant $-1$ and a generator of order $3$;\vspace{2mm}
\item $\iota:\{k,\ldots,k+15\}\longrightarrow\{k,\ldots,k+15\}$:
\begin{center} \begin{tabular}{c|c|c|c }
   $n$          &  $k$  & $k+1$ & $k+12$   \\ \hline
    $ \iota(n)$ & $k+15$ & $k+13$ & $k+14$    \\
     \end{tabular}
\end{center}
and $\iota|_{\{k+2,\ldots,k+11\}}$ is defined like in 3. for the appropriately shifted domain. We assign determinants as follows: $det\,\sigma_{k+j}=-1$ iff $j=1,12,13,14$;\\ -- delivers a free generator of the determinant $-1$ and two free generators of the determinant $1$;
\end{enumerate}
In every case but 1. we have relations
$$\mbox{R($j$)}:\;
\sigma_{k+j}=\sigma_{k+j+1}\sigma_{\iota(k+j+1)+\delta_{k+j+1}},\;\;\;j=0,\ldots,l$$
and
$$I(j):\;\sigma_{k+j}\sigma_{\iota(k+j)}=1,\;\;\;j=0,\ldots,l+1$$
with appropriately taken $l$.

First we show that in each case the building involution delivers some number of independent generators.\\
\textbf{Case 1.} We have just one generator of order $2$.\\
\textbf{Case 2.} We have $l=0$ and we can drop $I(0)\equiv I(1)$ and the generator $\sigma_{k+1}$. After substituting $\sigma_k^{-1}$ instead $\sigma_{k+1}$ in $R(0)$ we are left with just one generator $\sigma_k$ of order $3$.\\
\textbf{Case 3.} We have $l=8$. Obviously we can drop all the relations $I(j)$ and then we can drop the generators $\sigma_{\iota(k+j)}=\sigma_{k+j}^{-1}$ for $j=0,1,2,3,5$. After substituting the left generators into the relations $R(j)$ we see that $R(0)\equiv R(4) \equiv R(8)$, $R(1)\equiv R(3) \equiv R(6)$ and $R(2)\equiv R(5) \equiv R(7)$, thus we are left with relations $R(j)$, $j=0,1,2$. Now we can drop the relation $R(1)$ and  the generator $\sigma_{k+3}=\sigma_{k+1}^{-1}\sigma_{k+2}$, then the relation $R(3)$ and the generator $\sigma_{k+5}=\sigma_{k+2}^{-1}\sigma_{k+1}^{-1}\sigma_{k+2}$ and finally the relation $R(0)$ and the generator $\sigma_k=\sigma_{k+1}\sigma_{k+2}^{-1}\sigma_{k+1}^{-1}\sigma_{k+2}$. We are left with two hyperbolic generators $\sigma_{k+1}$, $\sigma_{k+2}$ and no relations.

The remaining cases describe building involutions which deliver a generator of infinite order with the negative determinant. As was already announced every such a building involution has to simultaneously  deliver at least one generator of order $2$ or at least one generator of order $3$ or  at least two hyperbolic generators.\\
\textbf{Case 4.} We have $l=5$. Drop the relations $I(j)$, $j=4,5,6$ and next the generators $\sigma_{\iota(k+j)}=\sigma_{k+j}^{-1}$ and the relations $I(j)$ for $j=0,1,3$.  After substituting the left generators into the relations $R(j)$ we see that $R(0)\equiv R(3)\equiv R(5)$ and $R(1)\equiv R(2)\equiv R(4)$, thus we are left with the relations $R(0)$ and $R(1)$. Now we can drop the relation $R(1)$ and the generator $\sigma_{k+3}=\sigma_{k+2}\sigma_{k+1}$, then the relation $R(0)$ and the generator $\sigma_k=\sigma_{k+1}\sigma_{k+2}\sigma_{k+1}$. We are left with the hyperbolic generator $\sigma_{k+1}$ and the generator $\sigma_{k+2}$ of order $2$ and no relation between them. \\
\textbf{Case 5.} We have $l=6$. Drop all the relations $I(j)$ and next the generators $\sigma_{\iota(k+j)}=\sigma_{k+j}^{-1}$  for $j=0,1,2,4$.  After substituting the left generators into the relations $R(j)$ we see that $R(0)\equiv R(4)\equiv R(6)$ and $R(1)\equiv R(3)\equiv R(5)$, thus we are left with the relations $R(0)$, $R(1)$ and $R(2)$. Now we can drop the relation $R(1)$ and the generator $\sigma_{k+4}=\sigma_{k+2}^{-1}\sigma_{k+1}$, then the relation $R(0)$ and the generator $\sigma_k=\sigma_{k+1}\sigma_{k+2}^{-1}\sigma_{k+1}$. We are left with the hyperbolic generator $\sigma_{k+1}$ and the generator $\sigma_{k+2}$ of order $3$ and no relation between them.\\
\textbf{Case 6.} We have $l=14$. Considering generators $\sigma_k$  for $j=2,\ldots,11$ we have to consider as well  the relations $R(j)$ for $j=2,3,\ldots,10$ and the relations $I(j)$ for $j=2,3,\ldots,11$. Then we may use Case~3 with the domain shifted by $2$. From this we get two hyperbolic generators $\sigma_{k+3}$ and $\sigma_{k+4}$ with the determinants equal to $1$ and no relations. It is left to consider the  generators $\sigma_j$ for $j=0,1,11,12,13,14,15$ and the relations $I(j)$ for $j=0,1,12,13,14,15$ and the  relations $R(j)$, for $j=0,1,11,12,13,14$. We have to consider $\sigma_{k+11}$ since the relation $R(11)$ has not been use so far.  Drop all the remaining  relations $I(j)$ and next the generators $\sigma_{\iota(k+j)}=\sigma_{k+j}^{-1}$ for $j=0,1,12$.  After substituting the left generators into the relations $R(j)$ we see that $R(0)\equiv R(12)\equiv R(14)$ and $R(1)\equiv R(11)\equiv R(13)$, thus we are left with the relations $R(0)$ and $R(1)$. Now we can drop the generator $\sigma_{k+12}=\sigma_{k+2}^{-1}\sigma_{k+1}$, as generated by $\sigma_{k+1}$ and the two hyperbolic generators $\sigma_{k+3}$ and $\sigma_{k+4}$ already considered, and the relation $R(1)$.  Then we may drop the generator $\sigma_k=\sigma_{k+1}\sigma_{k+12}$ and the relation $R(0)$. Finally we are left with three hyperbolic generators, two of the determinant $1$, namely $\sigma_{k+3}$  and $\sigma_{k+4}$, and the generator $\sigma_{k+1}$  of the determinant $-1$, and no relations.

We have finished the first step of induction. For each of the six chosen building involutions we have $det \sigma_k=1$.

Now assume that we have taken  the building involutions $\iota_i$, $i=0,\ldots, n+1$ from our list and that the involution
$$\iota_0\sqcup\iota_1\sqcup\iota_2\sqcup\ldots\sqcup\iota_{n}$$
brings some number of independent generators. We  have  two new generators $\sigma_{n-2}$ and $\sigma_{k_{n+1}+l_{n+1}+1}$. Recall that according to our construction both of them have the determinant equal to $1$.  We have to check if the involution $\iota_0\sqcup\iota_1\sqcup\iota_2\sqcup\ldots\sqcup\iota_{n+1}$ have any other independent generators but these delivered by $\iota_0\sqcup\iota_1\sqcup\iota_2\sqcup\ldots\sqcup\iota_{n}$  and $\iota_{n+1}$. The following new relations appear:
$$\begin{array}{ll}
R'(1): & \sigma_{-n-2}=\sigma_{-n-1}\sigma_{k_n+l_n+2},\\&\\
  R'(2):& \sigma_{k_n+l_n+1}=\sigma_{k_n+l_n+2}\sigma_{k_{n+1}+l_{n+1}+1},\\&\\
  R'(3):& \sigma_{k_{n+1}+l_{n+1}}=\sigma_{k_{n+1}+l_{n+1}+1}\sigma_{-n-1}
  \end{array}$$
and $$I':\;\;\sigma_{-n-2}\sigma_{k_{n+1}+l_{n+1}+1}=1.$$
We can drop the generator $\sigma_{k_{n+1}+l_{n+1}+1}=\sigma_{-n-2}^{-1}$ and the relation $I'$. If we substitute the above to the relations $R'(2)$ and $R'(3)$ and use the relations $\sigma_{k_n+l_n+1}\sigma_{-n-1}=1$ and $\sigma_{k_{n+1}+l_{n+1}}\sigma_{k_n+l_n+2}=1$ then we get $R'(1)\equiv R'(2)\equiv R'(3)$. Now we can drop the generator $\sigma_{-n-2}$ and the relation $R'(1)$.
\end{proof}

\end{document}